\newtheorem{theorem}{Theorem}[section]
\newtheorem{cor}[theorem]{Corollary}
\theoremstyle{definition}
\newtheorem{definition}[theorem]{Definition}
\newtheorem{example}[theorem]{Example}
\theoremstyle{remark}
\newtheorem{remark}[theorem]{Remark}
\numberwithin{equation}{section}
\begin{document}

\newcommand{\spacing}[1]{\renewcommand{\baselinestretch}{#1}\large\normalsize}
\spacing{1.14}

\title{Invariant Matsumoto metrics on homogeneous spaces}

\author {H. R. Salimi Moghaddam}

\address{Department of Mathematics, Faculty of  Sciences, University of Isfahan, Isfahan,81746-73441-Iran.} \email{salimi.moghaddam@gmail.com and hr.salimi@sci.ui.ac.ir}

\keywords{invariant metric, flag curvature, Matsumoto metric, homogeneous space, Lie group\\
AMS 2010 Mathematics Subject Classification: 22E60, 53C60, 53C30.}

%%\date{\today}

\begin{abstract}
In this paper we consider invariant Matsumoto metrics which are
induced by invariant Riemannian metrics and invariant vector
fields on homogeneous spaces then we give the flag curvature
formula of them. Also we study the special cases of naturally
reductive spaces and bi-invariant metrics. We end the article by
giving some examples of geodesically complete Matsumoto spaces.
\end{abstract}

\maketitle

%%---------------------------INTRODUCTION--------------------------

\section{\textbf{Introduction}}\label{intro}
In the last decade the study of invariant Finsler structures on
Lie groups and homogeneous spaces has been extended. Lie groups
and homogeneous spaces equipped with invariant Finsler metrics are
best spaces for finding spaces with some curvature properties.
Some curvature properties of these manifolds have been studied  in
\cite{DeHo1}, \cite{DeHo2}, \cite{EsSa}, \cite{Sa1}, \cite{Sa2}
and \cite{Sa3}.\\
An important family of Finsler metrics is the family of
$(\alpha,\beta)-$metrics. These metrics are introduced by M.
Matsumoto (see \cite{Ma2}). The interesting and important examples
of $(\alpha,\beta)-$metrics are Randers metric $\alpha+\beta$,
Kropina metric $\frac{\alpha^2}{\beta}$, and Matsumoto metric
$\frac{\alpha^2}{\alpha-\beta}$, where
$\alpha(x,y)=\sqrt{g_{ij}(x)y^iy^j}$ and $\beta(x,y)=b_i(x)y^i$
and $g$ and $b$ are a Riemannian metric and  a 1-form respectively
as follows:
\begin{eqnarray}
  g&=&g_{ij}dx^i\otimes dx^j \\
  b&=&b_idx^i.
\end{eqnarray}\\
In the Matsumoto metric, the 1-form $b=b_idx^i$ was originally to
be induced by the Earth's gravity (see \cite{AnInMa} or
\cite{Ma1}.).\\
In a natural way, the Riemannian metric $g$ induces an inner
product on any cotangent space $T^\ast_xM$ such that
$<dx^i(x),dx^j(x)>=g^{ij}(x)$. The induced inner product on
$T^\ast_xM$ induces a linear isomorphism between $T^\ast_xM$ and
$T_xM$ (for more details see \cite{DeHo2}.). Then the 1-form $b$
corresponds to a vector field $\tilde{X}$ on $M$ such that
\begin{eqnarray}
  g(y,\tilde{X}(x))=\beta(x,y).
\end{eqnarray}

Therefore we can write the Matsumoto metric
$F=\frac{\alpha^2}{\alpha-\beta}$ as follows:
\begin{eqnarray}\label{F}
  F(x,y)=\frac{\alpha(x,y)^2}{\alpha(x,y)-g(\tilde{X}(x),y)}.
\end{eqnarray}
One of the fundamental quantities which associates with a Finsler
space is flag curvature. Flag curvature is computed by the
following formula:
\begin{eqnarray}\label{flag}
% \nonumber to remove numbering (before each equation)
  K(P,Y)=\frac{g_Y(R(U,Y)Y,U)}{g_Y(Y,Y).g_Y(U,U)-g_Y^2(Y,U)},
\end{eqnarray}
where $g_Y(U,V)=\frac{1}{2}\frac{\partial^2}{\partial s\partial
t}(F^2(Y+sU+tV))|_{s=t=0}$, $P=span\{U,Y\}$,
$R(U,Y)Y=\nabla_U\nabla_YY-\nabla_Y\nabla_UY-\nabla_{[U,Y]}Y$ and
$\nabla$ is the Chern connection induced by $F$ (see \cite{BaChSh}
and \cite{Sh}.).\\
In this paper we consider invariant Matsumoto metrics which are
induced by invariant Riemannian metrics and invariant vector
fields on homogeneous spaces then we give the flag curvature
formula of them. Also we study the special cases of naturally
reductive spaces and bi-invariant metrics. We end the article by
giving some examples of geodesically complete Matsumoto spaces.

\section{\textbf{Flag curvature of invariant Matsumoto metrics on homogeneous spaces}}

Let $G$ be a compact Lie group, $H$ a closed subgroup, and
$\frak{g}$ and $\frak{h}$ be the Lie algebras of $G$ and $H$
respectively. Suppose that $g_0$ is a bi-invariant Riemannian
metric on $G$, then the tangent space of the homogeneous space
$G/H$ is given by the orthogonal compliment $\frak{m}$ of
$\frak{h}$ in $\frak{g}$ with respect to $g_0$. Each invariant
metric $g$ on $G/H$ is determined by its restriction to
$\frak{m}$. The arising $Ad_H$-invariant inner product from $g$ on
$\frak{m}$ can extend to an $Ad_H$-invariant inner product on
$\frak{g}$ by taking $g_0$ for the components in $\frak{h}$. In
this way the invariant metric $g$ on $G/H$ determines a unique
left invariant metric on $G$ that we also denote by $g$. The
values of $g_0$ and $g$ at the identity are inner products on
$\frak{g}$ and we determine them by $<.,.>_0$ and $<.,.>$
respectively. The inner product $<.,.>$ determines a positive
definite endomorphism $\phi$ of $\frak{g}$ such that $<X,Y>=<\phi
X,Y>_0$ for all $X, Y\in\frak{g}$.\\

\begin{theorem}\label{flagcurvature}
Let $G, H, \frak{g}, \frak{h}, g, g_0$ and $\phi$ be as above.
Assume that $\tilde{X}$ is an invariant vector field on $G/H$
which is parallel with respect to $g$ and
$\sqrt{g(\tilde{X},\tilde{X})}<\frac{1}{2}$ and $X:=\tilde{X}_H$. Suppose that
$F=\frac{\alpha^2}{\alpha-\beta}$ is the Matsumoto metric induced
by $g$ and $\tilde{X}$. Assume that $(P,Y)$ is a flag in
$T_H(G/H)$ such that $\{Y,U\}$ is an orthonormal basis of $P$ with
respect to $<.,.>$. Then the flag curvature of the flag $(P,Y)$ in
$T_H(G/H)$ is given by
\begin{equation}\label{main-flag-cur-formula}
    K(P,Y)=\frac{(1-<Y,X>)^2\{B(1-<Y,X>)(1-2<Y,X>)+3A<U,X>\}}{(1-<Y,X>)(1-2<Y,X>)+2<U,X>^2},
\end{equation}
where
\begin{eqnarray}\label{A}
% \nonumber to remove numbering (before each equation)
   A&=&<R(U,Y)Y,X>\nonumber\\
   &=&-\frac{1}{4}(<[\phi U,Y]+[U,\phi Y],[Y,X]>_0+<[U,Y],[\phi Y,X]+[Y,\phi X]>_0)\nonumber\\
    &&-\frac{3}{4}<[Y,U],[Y,X]_\frak{m}>-\frac{1}{2}<[U,\phi X]+[X,\phi U],\phi^{-1}([Y,\phi Y])>_0\\
    &&+\frac{1}{4}<[U,\phi Y]+[Y,\phi U],\phi^{-1}([Y,\phi X]+[X,\phi Y])>_0,\nonumber
\end{eqnarray}
and
\begin{eqnarray}\label{B}
  B&=&<R(U,Y)Y,U>\nonumber\\
  &=&-\frac{1}{2}<[\phi U,Y]+[U,\phi Y],[Y,U]>_0\nonumber \\
  && -\frac{3}{4}<[Y,U],[Y,U]_{\frak{m}}>-<[U,\phi U],\phi^{-1}([Y,\phi Y])>_0 \\
  && +\frac{1}{4}<[U,\phi Y]+[Y,\phi U],\phi^{-1}([Y,\phi U]+[U, \phi Y])>_0.\nonumber
\end{eqnarray}
\end{theorem}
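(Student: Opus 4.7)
The plan is to reduce the Finsler flag curvature to Riemannian quantities, compute the fundamental tensor $g_Y$ explicitly for the Matsumoto metric, and then invoke a classical formula for the Riemannian curvature on a homogeneous space. The first ingredient is the observation that since $\tilde{X}$ is parallel with respect to $g$, the $1$-form $\beta$ is parallel with respect to the Levi-Civita connection of $\alpha$. A standard fact for $(\alpha,\beta)$-metrics then implies that the spray of $F$ coincides with that of $\alpha$, and hence the Chern connection of $F$ agrees with the Levi-Civita connection of $g$. Consequently $R(U,Y)Y$ in (\ref{flag}) is simply the Riemannian curvature operator of $g$ applied to $U$ and $Y$, and the condition $\sqrt{g(\tilde X,\tilde X)}<\tfrac12$ guarantees that $F$ is strongly convex so that $g_Y$ is well-defined.

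Next I would compute the fundamental tensor $g_Y$ at the identity coset, starting from $g_Y(U,V)=\tfrac{1}{2}\partial_s\partial_t F^2(Y+sU+tV)|_{s=t=0}$ for $F=\alpha^2/(\alpha-\beta)$. With $Y$ a unit vector for $<.,.>$ and $U$ a unit vector orthogonal to $Y$, the derivatives of $\alpha(Y+sU+tV)=\sqrt{<Y+sU+tV,Y+sU+tV>}$ and of $\beta(Y+sU+tV)=<Y+sU+tV,X>$ are elementary, so a routine chain-rule computation expresses $g_Y(Y,Y)$, $g_Y(U,U)$, and $g_Y(Y,U)$ as rational functions of $<Y,X>$ and $<U,X>$. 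Substituting these into the denominator $g_Y(Y,Y)g_Y(U,U)-g_Y(U,Y)^2$ of the flag-curvature formula (\ref{flag}) should produce, after cancellation, the factor $(1-<Y,X>)(1-2<Y,X>)+2<U,X>^2$ appearing in (\ref{main-flag-cur-formula}), while a common scalar factor from the fundamental tensor accounts for the $(1-<Y,X>)^2$ prefactor outside the braces.

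For the numerator I would write $g_Y(R(U,Y)Y,U)$ by linearly decomposing it using the same bilinear expression for $g_Y$ in terms of $g$ and $\beta$. Since the only nontrivial projections that appear are onto $U$ and onto the direction of $X$ (or $Y$), this decomposition involves exactly the two scalar invariants $A=<R(U,Y)Y,X>$ and $B=<R(U,Y)Y,U>$; their coefficients combine with the denominator computation to yield the bracketed expression $B(1-<Y,X>)(1-2<Y,X>)+3A<U,X>$ in (\ref{main-flag-cur-formula}). Finally, to express $A$ and $B$ intrinsically in terms of the Lie bracket and the endomorphism $\phi$, I would invoke the standard Püttmann-type formula for the Riemannian curvature of an invariant metric on a compact homogeneous space obtained by rescaling a bi-invariant metric through $\phi$; polarising in the appropriate arguments gives (\ref{A}) and (\ref{B}) directly.

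The main obstacle is the bookkeeping in the second and third steps: the Matsumoto function is a nonpolynomial rational expression in $\alpha$ and $\beta$, so its Hessian along $Y$ spawns many terms that have to be simplified and collected, and the reconciliation with the compact form of (\ref{main-flag-cur-formula}) relies on a careful common-factor cancellation between numerator and denominator. Once the fundamental tensor at $Y$ is in hand, however, the remainder of the argument is a linear-algebraic substitution together with an application of the homogeneous-space curvature formula as a black box.
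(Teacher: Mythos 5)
Your proposal follows essentially the same route as the paper's proof: use the parallelism of $\tilde{X}$ to identify the Chern connection of $F$ with the Levi-Civita connection of $g$ (so $R$ is the Riemannian curvature), compute the fundamental tensor $g_Y$ of the Matsumoto metric directly from its definition, substitute into the flag curvature formula using the orthonormality of $\{Y,U\}$, and obtain $A$ and $B$ from P\"uttmann's formula. The only points worth noting are that the intermediate expression for $g_Y(R(U,Y)Y,U)$ also formally contains $\langle R(U,Y)Y,Y\rangle$-terms (which vanish by the curvature symmetries, consistent with your claim that only the $U$- and $X$-projections survive), and that carrying out the Hessian bookkeeping you correctly identify as the main labor is exactly what the paper does in its displayed formula for $g_Y(U,V)$.
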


\begin{proof}
From the assumption, $\tilde{X}$ is parallel with respect to $g$ therefore the Chern
connection of $F$ coincides on the Levi-Civita connection of $g$
(see \cite{AnInMa}.). So the Finsler metric $F$ and the Riemannian
metric $g$ have the
same curvature tensor. We show it by $R$.\\
By using the definition of $g_Y(U,V)$ and some computations for
$F$ we have:
\begin{eqnarray}\label{g_Y}
% \nonumber to remove numbering (before each equation)
  g_Y(U,V)&=&\frac{1}{(\sqrt{g(Y,Y)}-g(Y,X))^2}\{4g(Y,U)g(Y,V)+2g(Y,Y)g(U,V)\}\nonumber\\
  &&+\frac{1}{(\sqrt{g(Y,Y)}-g(Y,X))^4}\{-4g(Y,Y)g(Y,U)g(Y,V)\\
  &&+g(Y,Y)^{\frac{3}{2}}(g(Y,V)g(U,X)+g(Y,U)g(V,X))\nonumber\\
  &&+g(Y,Y)^2(3g(U,X)g(V,X)-g(U,V))\nonumber\\
  &&+\sqrt{g(Y,Y)}g(Y,X)(7g(Y,U)g(Y,V)+g(Y,Y)g(U,V))\nonumber\\
  &&-4g(Y,Y)g(Y,X)(g(Y,V)g(U,X)+g(Y,U)g(V,X))\nonumber\}
\end{eqnarray}
Now by using the above formula and the fact that $\{Y,U\}$ is an
orthonormal basis for $P$ with respect to $g$, we have
\begin{eqnarray}\label{eq1}
% \nonumber to remove numbering (before each equation)
  g_Y(R(U,Y)Y,U)&=&\frac{2<R(U,Y)Y,U>}{(1-<Y,X>)^2}\nonumber\\
  &&+\frac{1}{(1-<Y,X>)^4}\{<R(U,Y)Y,Y><U,X>\nonumber\\
  &&+3<R(U,Y)Y,X><U,X>-<R(U,Y)Y,U>\\
  &&+<Y,X><R(U,Y)Y,U>\nonumber\\
  &&-4<Y,X><R(U,Y)Y,Y><U,X>\}\nonumber
\end{eqnarray}
and
\begin{eqnarray}\label{eq2}
% \nonumber to remove numbering (before each equation)
  g_Y(Y,Y).g_Y(U,U)-g^2_Y(U,Y)&=& \frac{2}{(1-<Y,X>)^4}+\frac{2<U,X>^2+<Y,X>-1}{(1-<Y,X>)^6}.
\end{eqnarray}
We can obtain the relations (\ref{A}) and (\ref{B}) by using
P\"uttmann's formula (see \cite{Pu}.).\\
Substituting the relations (\ref{eq1}) and (\ref{eq2}) in the equation
(\ref{flag}) completes the proof.

\end{proof}
\begin{remark}
A homogeneous space $M=G/H$ with a $G-$invariant indefinite
Riemannian metric $g$ is said to be naturally reductive if it
admits an $ad(H)$-invariant decomposition
$\frak{g}=\frak{h}+\frak{m}$ satisfying the condition
\begin{eqnarray}
 B(X,[Z,Y]_{\frak{m}})+B([Z,X]_{\frak{m}},Y)=0 \hspace{1.5cm}\mbox{for} \ \ \ X, Y, Z \in
 \frak{m},
\end{eqnarray}
where $B$ is the bilinear form on $\frak{m}$ induced by $\frak{g}$
and $[,]_{\frak{m}}$ is the projection to $\frak{m}$ with respect
to the decomposition $\frak{g}=\frak{h}+\frak{m}$ (For more
details see \cite{KoNo}.). In this case the relation
(\ref{main-flag-cur-formula}) for the flag curvature reduces to a
simpler equation, because in the case of naturally reductive
homogeneous space we have (see \cite{KoNo}.)

\begin{eqnarray}
    R(U,Y)Y&=&\frac{1}{4}[Y,[U,Y]_{\frak{m}}]_{\frak{m}}+[Y,[U,Y]_{\frak{h}}].
\end{eqnarray}
\end{remark}

Now we consider the case that the invariant Matsumoto metric is
defined by a bi-invariant Riemannian metric on a Lie group.
\begin{theorem}
Let $G$ be a Lie group and $g$ be a bi-invariant Riemannian metric
on $G$. Assume that $\tilde{X}$ is a left invariant vector field
on $G$ which is parallel with respect to $g$ and
$\sqrt{g(\tilde{X},\tilde{X})}<\frac{1}{2}$ and $X:=\tilde{X}_H$. Suppose that
$F=\frac{\alpha^2}{\alpha-\beta}$ is the Matsumoto metric induced
by $g$ and $\tilde{X}$, also let $(P,Y)$ be a flag in $T_eG$ such
that $\{Y,U\}$ be an orthonormal basis of $P$ with respect to
$<.,.>$. Then the flag curvature of the flag $(P,Y)$ in $T_eG$ is
given by
\begin{eqnarray}\label{flag bi-invariant}
   K(P,Y)&=&\frac{-(1-<Y,X>)^2}{4(1-<Y,X>)(1-2<Y,X>)+8<U,X>^2}\nonumber\\
   &&\hspace{-1cm} \{<[[U,Y],Y],U>(1-<Y,X>)(1-2<Y,X>)+3<[[U,Y],Y],X><U,X>\}.
\end{eqnarray}

\end{theorem}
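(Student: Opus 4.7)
The approach is to specialize Theorem \ref{flagcurvature} by viewing the Lie group $G$ as the homogeneous space $G/\{e\}$, with the bi-invariant metric $g$ playing the role of both $g_0$ and $g$. In this setting $\frak{h}=0$, $\frak{m}=\frak{g}$, the inner products $<.,.>_0$ and $<.,.>$ coincide, the positive-definite endomorphism $\phi$ reduces to the identity of $\frak{g}$, and the projection $[\cdot,\cdot]_\frak{m}$ is just the ordinary bracket. Since $\tilde{X}$ is left-invariant and parallel with respect to $g$, the Chern connection of $F$ still agrees with the Levi-Civita connection of $g$, so the curvature tensors coincide and Theorem \ref{flagcurvature} applies directly.

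The plan is to substitute $\phi=I$ into (\ref{A}) and (\ref{B}) and simplify term by term. The symmetric combinations $[\phi U,Y]+[U,\phi Y]$ and $[\phi Y,X]+[Y,\phi X]$ collapse to $2[U,Y]$ and $2[Y,X]$ respectively, while the antisymmetric combinations $[U,\phi X]+[X,\phi U]$, $[U,\phi Y]+[Y,\phi U]$ and $[Y,\phi X]+[X,\phi Y]$ all vanish by skew-symmetry of the bracket, and $[Y,\phi Y]=[Y,Y]=0$. As a result the last two summands of $A$ and the last two summands of $B$ drop out, leaving only the contributions of the first two lines in each expression.

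Combining the two survivors with the ad-invariance identity $<[X,Y],Z>=-<Y,[X,Z]>$ satisfied by any bi-invariant inner product, one obtains, after a short manipulation equivalent to the classical formula $R(U,Y)Y=-\tfrac{1}{4}[[U,Y],Y]$ for a bi-invariant Riemannian metric,
\[
A = -\tfrac{1}{4}<[[U,Y],Y],X>, \qquad B = -\tfrac{1}{4}<[[U,Y],Y],U>.
\]

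Substituting these into (\ref{main-flag-cur-formula}) and pulling the common factor $-\tfrac{1}{4}$ out of the numerator produces the denominator coefficient $4$ in (\ref{flag bi-invariant}), yielding the stated formula. The only real work lies in the bookkeeping of reducing the six terms of $A$ and four terms of $B$ once $\phi=I$ is plugged in; this is the main obstacle, but it is mechanical and introduces no new conceptual ingredient beyond ad-invariance.
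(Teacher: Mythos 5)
Your proposal is correct and follows essentially the same route as the paper: both specialize Theorem \ref{flagcurvature} and reduce to $A=-\tfrac{1}{4}<[[U,Y],Y],X>$, $B=-\tfrac{1}{4}<[[U,Y],Y],U>$ before substituting into (\ref{main-flag-cur-formula}). The paper simply quotes the classical identity $R(U,Y)Y=-\tfrac{1}{4}[[U,Y],Y]$ for bi-invariant metrics, whereas you re-derive it by setting $\phi=I$ in (\ref{A}) and (\ref{B}) and using ad-invariance; the computation checks out.
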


\begin{proof}
$g$ is bi-invariant therefore we have
$R(U,Y)Y=-\frac{1}{4}[[U,Y],Y]$. Now by using theorem
(\ref{main-flag-cur-formula}), the proof is completed.
\end{proof}

\section{\textbf{Some examples of geodesically complete Matsumoto spaces}}

In this section we give some examples of geodesically complete
Matsumoto spaces. We begin with a definition from \cite{BeEhEa}.

\begin{definition}
The Riemannian manifold $(M,g)$ is said to be homogeneous if the
group of isometries of $M$ acts transitively on $M$.
\end{definition}

\begin{theorem}
Suppose that $(M,g)$ is a homogeneous Riemannian manifold. Let $F$
be a Matsumoto metric of Berwald type defined by $g$ and a 1-form
$b$. Then $(M,F)$ is geodesically complete. Moreover if $M$ is
connected then $(M,F)$ is complete.
\end{theorem}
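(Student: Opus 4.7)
The plan is to exploit the defining property of a Berwald $(\alpha,\beta)$-metric: its Chern connection coincides with the Levi-Civita connection of the underlying Riemannian metric $g$. First I would invoke this fact (already used in the proof of Theorem~\ref{flagcurvature}) to deduce that the geodesic spray of $F$ equals the geodesic spray of $g$, so that a smooth curve $\gamma\colon I\to M$ is an $F$-geodesic if and only if it is a $g$-geodesic. In particular the two families of geodesics coincide as parametrized curves, not merely as point sets, so geodesic completeness of $g$ transfers verbatim to $F$.

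Next I would use the classical fact that every homogeneous Riemannian manifold $(M,g)$ is geodesically complete. The standard argument runs as follows: given a $g$-geodesic $\gamma\colon[0,T)\to M$ with $T$ maximal and finite, one picks $t_0$ close to $T$ and an isometry $\varphi$ of $(M,g)$ mapping a fixed base point $p_0$ to $\gamma(t_0)$; translating a short geodesic emanating from $p_0$ by $\varphi$ extends $\gamma$ past $T$, contradicting maximality. Combined with the previous step this shows that every $F$-geodesic is defined for all real values of the parameter, which is precisely geodesic completeness of $(M,F)$.

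For the ``moreover'' clause I would invoke the Finslerian Hopf--Rinow theorem (see, e.g., \cite{BaChSh}): on a connected Finsler manifold, forward (respectively backward) geodesic completeness implies forward (respectively backward) metric completeness. Since every $F$-geodesic has been shown to extend over the whole real line, both directions of geodesic completeness hold, and hence $(M,F)$ becomes a complete generalized metric space. The one subtlety I anticipate, but do not regard as a genuine obstacle, is the non-reversibility of the Matsumoto metric: one must record the forward and backward versions of Hopf--Rinow separately, and also note that the positivity bound $\sqrt{g(\tilde X,\tilde X)}<\tfrac12$ needed for $F$ to be a bona fide Finsler function is automatically propagated globally, since parallelism of $\tilde X$ with respect to $g$ makes its $g$-norm constant along any curve.
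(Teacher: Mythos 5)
Your argument is correct and follows the same route as the paper: Berwald type gives coincidence of the Chern and Levi-Civita connections (hence of the geodesics), homogeneity of $(M,g)$ gives geodesic completeness of $g$ and therefore of $F$, and the Finslerian Hopf--Rinow theorem handles the connected case. Your additional remarks on the extension argument for homogeneous metrics and on the forward/backward distinction for the non-reversible Matsumoto metric are sound elaborations of the same proof rather than a different approach.
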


\begin{proof}
The Chern connection of $F$ and the Levi-Civita connection of $g$
coincide and therefore their geodesics coincide too. On the other
hand $(M,g)$ is a homogeneous Riemannian manifold, hence $(M,g)$
is geodesically complete (see \cite{BeEhEa} page 185.). Therefore
$(M,F)$ is geodesically complete. If $M$ is connected then by
using Hofp-Rinow theorem for Finsler manifolds, $(M,F)$ is
complete.
\end{proof}

\begin{cor}
Let $G$ be a Lie group and $g$ be a left invariant Riemannian
metric on $G$. Also suppose that $X$ is a parallel vector field
with respect to the Levi-Civita connection of $g$ such that
$\sqrt{g(X,X)}<\frac{1}{2}$. Then the Matsumoto metric defined by $g$, $X$ and the
relation (\ref{F}) is geodesically complete.
\end{cor}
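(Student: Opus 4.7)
The plan is to reduce the corollary directly to the preceding theorem by verifying its two hypotheses: that the underlying Riemannian manifold is homogeneous, and that the Matsumoto metric is of Berwald type.

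First I would observe that $(G,g)$ is a homogeneous Riemannian manifold. Indeed, since $g$ is left invariant, every left translation $L_a : G \to G$ is an isometry of $(G,g)$, and the family of left translations acts transitively on $G$. Hence $G$ lies inside the isometry group of $(G,g)$ acting transitively, which is exactly the definition stated in the excerpt.

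Next I would argue that $F$ is of Berwald type. By the reasoning already used in the proof of Theorem \ref{flagcurvature}, the parallelism of $X$ with respect to the Levi-Civita connection of $g$ forces the Chern connection of $F$ to coincide with the Levi-Civita connection of $g$ (this is the citation to \cite{AnInMa}). Since the Chern connection is then linear and has no $y$-dependence, $F$ is Berwald. The condition $\sqrt{g(X,X)} < \frac{1}{2}$ serves to guarantee that the defining expression \eqref{F} is strongly convex and hence a genuine Finsler metric (this is the standard admissibility bound for Matsumoto metrics).

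With both hypotheses in place, the previous theorem applies directly and yields geodesic completeness of $(G,F)$. I do not anticipate any serious obstacle: the corollary is essentially a specialization of the theorem to the class of homogeneous Riemannian structures arising from left-invariant metrics on Lie groups, and the only verification required is the homogeneity claim, which is immediate from left invariance.
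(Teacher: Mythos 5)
Your proposal is correct and follows exactly the route the paper intends: the corollary is an immediate specialization of the preceding theorem, obtained by noting that left translations make $(G,g)$ a homogeneous Riemannian manifold and that parallelism of $X$ forces the Chern connection to coincide with the Levi-Civita connection, so $F$ is Berwald. Both hypotheses of the theorem are thereby verified, and nothing further is needed.
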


Now we consider an abelian Lie group equipped with a left invariant
Riemannian metric. We know that this space is flat. In this
case we have the following theorem,

\begin{theorem}
Let $G$ be an abelian Lie group equipped with a left invariant
Riemannian metric $g$ and let $\frak{g}$ be the Lie algebra of
$G$. Suppose that $X\in\frak{g}$ is a left invariant vector field
with $\sqrt{g(X,X)}<\frac{1}{2}$. Then the Matsumoto metric $F$ defined by
the formula (\ref{F}) is a flat geodesically complete locally
Minkowskian metric on $G$.
\end{theorem}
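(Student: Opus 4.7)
The plan is to exploit the abelian hypothesis to kill every bracket in sight, so that both the Levi-Civita connection of $g$ and the Chern connection of $F$ become trivial on the global frame of left-invariant vector fields, and then to string together the three conclusions.

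First I would use Koszul's formula. For left-invariant vector fields $U,V,W$ on any Lie group, $2g(\nabla_U V,W)$ reduces to a sum of Lie bracket terms $g([U,V],W)$, $g([W,U],V)$, $g([W,V],U)$ because $g(U,V)$, $g(V,W)$, $g(W,U)$ are constant functions. Since $G$ is abelian, every such bracket vanishes, so $\nabla_U V = 0$ for all left-invariant $U,V$. In particular the given left-invariant field $X$ is parallel with respect to $\nabla$, which (together with $\sqrt{g(X,X)}<1/2$) ensures the formula (\ref{F}) defines a genuine Matsumoto Finsler metric $F$. Moreover, since $\nabla$ annihilates a global frame, its Riemann curvature tensor is identically zero, so $(G,g)$ is flat.

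Next, the parallelism of $\tilde{X}=X$ means $F$ is of Berwald type and its Chern connection coincides with the Levi-Civita connection $\nabla$ of $g$ (this is the same fact used in the proof of Theorem~\ref{flagcurvature}). Consequently the corollary preceding this theorem applies directly and yields that $(G,F)$ is geodesically complete.

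For the remaining two claims, I would observe that since the Chern connection of $F$ equals $\nabla$ and the curvature tensor of $\nabla$ is zero, the flag curvature of $F$ vanishes identically; this can either be read off from formula (\ref{main-flag-cur-formula}) (every term $A$, $B$ appearing there is built from brackets which are all zero) or from the general fact that for a Berwald metric the predecessor of flag curvature is the Riemannian curvature of the underlying linear connection. Thus $F$ is flat. Finally, a Berwald metric whose underlying linear (Chern) connection has vanishing curvature tensor is locally Minkowskian by the classical Berwald--Szab\'o theorem (see \cite{BaChSh}): one can trivialise the Chern parallel transport on any simply connected neighbourhood to obtain coordinates in which $F$ depends only on the fibre variable $y$. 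Combining these three observations gives all the claims.

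The main obstacle is essentially notational rather than mathematical: once the abelian hypothesis is used to force $\nabla_U V=0$ on the left-invariant frame, flatness and parallelism of $X$ are automatic, and the geodesic completeness and locally Minkowski conclusions follow from results already stated or standard in the literature.
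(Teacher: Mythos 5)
Your proposal is correct and follows essentially the same route as the paper: Koszul's formula plus the abelian hypothesis kills all brackets, giving $\nabla_U V=0$ on the left-invariant frame, hence $X$ is parallel, $F$ is Berwald, the curvature tensor vanishes, and flatness, geodesic completeness (via the preceding corollary) and the locally Minkowskian property (via the flat-Berwald result, Proposition 10.5.1 of \cite{BaChSh}) all follow. No substantive difference from the paper's argument.
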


\begin{proof}
Assume that $U,V,W\in\frak{g}$, now by using the Koszul's formula
and the fact that $G$ is abelian we have $\nabla_YX=0$, for any
$Y\in\frak{g}$. Hence $X$ is parallel with respect to $\nabla$ and
$F$ is of Berwald type. Also the curvature tensor $R=0$ of $g$
coincides on the curvature tensor of $F$ and therefore the flag
curvature of $F$ is zero. $F$ is a flat Berwald metric therefore
by proposition 10.5.1 (page 275) of \cite{BaChSh}, $F$ is locally
Minkowskian.
\end{proof}

\begin{example}($E(2)$ group of rigid motions of Euclidean
2-space). We consider the Lie group $E(2)$ as follows:
\begin{eqnarray}
% \nonumber to remove numbering (before each equation)
  E(2)=\{\left[%
\begin{array}{ccc}
  \cos\theta & -\sin\theta & a \\
  \sin\theta & \cos\theta & b \\
  0 & 0 & 1 \\
\end{array}%
\right]| a,b,\theta\in\Bbb{R}\}.
\end{eqnarray}

The Lie algebra of $E(2)$ is of the form
\begin{eqnarray}
% \nonumber to remove numbering (before each equation)
  \frak{e}(2)= span\{x=\left[%
\begin{array}{ccc}
  0 & 0 & 1 \\
  0 & 0 & 0 \\
  0 & 0 & 0 \\
\end{array}%
\right],y=\left[%
\begin{array}{ccc}
  0 & 0 & 0 \\
  0 & 0 & 1 \\
  0 & 0 & 0 \\
\end{array}%
\right],z=\left[%
\begin{array}{ccc}
  0 & -1 & 0 \\
  1 & 0 & 0 \\
  0 & 0 & 0 \\
\end{array}%
\right]\},
\end{eqnarray}
where
\begin{eqnarray}
% \nonumber to remove numbering (before each equation)
  [x,y]=0 \ \ \ , \ \ \ [y,z]=x\ \ \ , \ \ \ [z,x]=y.
\end{eqnarray}
Now let $g$ be the left invariant Riemannian metric induced by the
following inner product,
\begin{eqnarray}\label{inner product}
% \nonumber to remove numbering (before each equation)
  <x,x>=<y,y>=<z,z>=\lambda^2 \ \ , \ \ <x,y>=<y,z>=<z,x>=0, \ \ \lambda>0.
\end{eqnarray}

In \cite{Sa3} we showed that the left invariant vector fields
which are parallel with respect to the Levi-Civita connection of
this space are of the form $U=uz$. Also we proved that $R=0$.
Assume that $\sqrt{<U,U>}<\frac{1}{2}$, in other words let
$0<|u|<\frac{1}{2\lambda}$. Hence, the left invariant Matsumoto
metric $F$ defined by $g$ and $U$ with formula (\ref{F}) is of
Berwald type. Also since $F$ is of Berwald type therefore the
curvature tensor of $F$ and $g$ coincide and $F$ is of zero
constant flag curvature. Hence $F$ is locally Minkowskian.
\end{example}
\begin{example} Another example of flat geodesically complete locally
Minkowskian Matsumoto spaces is described as follows.\\

Let $\frak{g}=span\{x,y,z\}$ be a Lie algebra such that
\begin{eqnarray}
% \nonumber to remove numbering (before each equation)
  [x,y]=\alpha y+\alpha z \ \ \ , \ \ \ [y,z]=2\alpha x \ \ \ , \ \ \ [z,x]=\alpha
  y+\alpha z \ \ \ , \ \ \ \alpha\in\Bbb{R}.
\end{eqnarray}
Also consider the inner product described by (\ref{inner product})
on $\frak{g}$.\\
Suppose that $G$ is a Lie group with Lie algebra $\frak{g}$, and
$g$ is the left invariant Riemannian metric induced by the above
inner product $<.,.>$ on $G$.\\
A direct computation show that $R=0$, therefore $(G,g)$ is a flat
Riemannian manifold. Also in \cite{Sa3} we proved that vector
fields which are parallel with respect to the Levi-Civita
connection of $(G,g)$ are of the form $U=uy-uz$. Now suppose that
$\sqrt{2}|u|\lambda=\sqrt{<U,U>}<\frac{1}{2}$ or equivalently let
$0<|u|<\frac{1}{2\sqrt{2}\lambda}$. Therefore the invariant
Matsumoto metric $F$ defined by $g$ and $U$ is a flat geodesically
complete locally Minkowskian metric on $G$. Also if we consider
$G$ is connected, $(G,F)$ will be complete.
\end{example}

{\large{\textbf{Acknowledgment.}}} This research was supported by the Center of Excellence for 
Mathematics at the University of Isfahan.
%%-------------------- BIBLIOGRAPHY------------------------

\bibliographystyle{amsplain}

\begin{thebibliography}{9}
\bibitem{AnInMa} P. L. Antonelli, R. S. Ingarden, M. Matsumoto, \emph {\em The Theory of Sprays and
Finsler Spaces with Applications in Physics and Biology}, Kluwer, Dordrecht (1993).
\bibitem{BaChSh} D. Bao, S. S. Chern and Z. Shen, \emph{An Introduction to Riemann-Finsler
Geometry}, Berlin: Springer, (2000).
\bibitem{BeEhEa} J. K. Beem, P. E. Ehrlich and K. L. Easley, \emph{Global Lorentzian Geometry},
(Marcel Dekker, INC.) (1996).
\bibitem{DeHo1} S. Deng and Z. Hou, \emph{Invariant Finsler Metrics on Homogeneous Manifolds},
J. Phys. A: Math. Gen. \textbf{37} (2004), 8245--8253.
\bibitem{DeHo2} S. Deng, Z. Hou, \emph{Invariant Randers Metrics on Homogeneous Riemannian
Manifolds}, J. Phys. A: Math. Gen. \textbf{37} (2004), 4353--4360.
\bibitem{EsSa} E. Esrafilian and H. R. Salimi Moghaddam, \emph{Flag Curvature of Invariant Randers
Metrics on Homogeneous Manifolds}, J. Phys. A: Math. Gen.
\textbf{39} (2006) 3319--3324.
\bibitem{KoNo} S. Kobayashi, K. Nomizu, \emph{Foundations of Differential Geometry},
Volume 2. Interscience Publishers, John Wiley\& Sons (1969).
\bibitem{Ma1} M. Matsumoto, \emph{A slope of a mountain is a Finsler surface with respect to a time
measure}, J. Math. Kyoto Univ. \textbf{29} (1989), 17–-25.
\bibitem{Ma2} M. Matsumoto, \emph{Theory of Finsler spaces with $(\alpha,\beta)-$metric},
Rep. Math. Phys. \textbf{31} (1992), 43--83.
\bibitem{Pu} T. P\"uttmann, \emph{Optimal Pinching Constants of Odd Dimensional Homogeneous Spaces},
Invent. Math. \textbf{138} (1999), 631--684.
\bibitem{Sa1} H. R. Salimi Moghaddam, \emph{On the flag curvature of invariant Randers
metrics}, Math Phys Anal Geom \textbf{11} (2008), 1-–9.
\bibitem{Sa2} H. R. Salimi Moghaddam, \emph{Flag curvature of invariant $(\alpha,\beta)$-metrics
of type $\frac{(\alpha+\beta)^2}{\alpha}$}, J. Phys. A: Math.
Theor.  \textbf{41} (2008).
\bibitem{Sa3} H. R. Salimi Moghaddam, \emph{Some Berwald spaces of non-positive flag curvature},
Journal of geometry and physics,  \textbf{59} (2009) 969--975.
\bibitem{Sh} Z. Shen, \emph{Lectures on Finsler Geometry}, (World Scientific) (2001).

\end{thebibliography}

\end{document}